\newtheorem{theorem}{Theorem}
\newtheorem{definition}[theorem]{Definition}
\newtheorem{remark}[theorem]{Remark}
\newtheorem{conjecture}[theorem]{Conjecture}
\newtheorem{example}[theorem]{Example}
\def\text{\mbox}
\def\tilde{\widetilde}
  \renewcommand{\tilde}{\widetilde}
    \DeclareMathOperator{\Gr}{Gr}
     \DeclareMathOperator{\SL}{SL}   
       \DeclareMathOperator{\Sp}{Sp}  
        \DeclareMathOperator{\SO}{SO}      
  \DeclareMathOperator{\Pic}{Pic}
\newcommand{\beqn}{\begin{equation}}
\newcommand{\eeqn}{\end{equation}}
\renewcommand{\ni}{\noindent}
 \newcommand{\Pbb}{\mathbb{P}}
\newcommand{\bc}{\mathbb{C}}
\newcommand{\bp}{\mathbb{P}}
\newcommand{\br}{\mathbb{R}}
\newcommand{\bz}{\mathbb{Z}}
       \title[Nonexistence of regular maps]       
       {Nonexistence of regular maps
between homogeneous projective varieties}
\date{}
\author{Shrawan Kumar}
\address{S. Kumar: Department of Mathematics, University of North Carolina, Chapel Hill, NC 27599-3250, USA}
\email{shrawan@email.unc.edu}
\begin{document}

\maketitle{}
\section{Introduction}
The base field is the field of complex numbers $\bc$. We study non-existence of non-constant regular maps from a partial flag variety $X=G/P$ to another partial flag variety $X'=G'/P'$, 
where $G$ (resp. $G'$) is a  connected simple algebraic group and $P\subset G$ (resp. $P'\subset G'$) is 
a parabolic subgroup.   Observe that there is no
uniqueness of the presentation $X\cong G/P$, e.g., the projective space
$\Pbb^{2n-1}$ is the homogeneous space of $\SL(2n)$ as well as $\Sp(2n)$ (see Remark \ref{remark6} for an exhaustive list). 

 The main result of this note is the following theorem (cf. Theorem \ref{thm1}):
 \vskip1ex
 {\bf Theorem:}  {\it Let $G, G'$ be as above  and $P$ (resp. $B'$) be a parabolic subgroup of $G$ different  
from its Borel subgroup (resp. a Borel subgroup of $G'$). Then, there does not exist any non-constant regular map
from $G/P$ to  $G'/B'$.}
 \vskip1ex

 In Section 3, we recall some  results on the non-existence of non-constant regular maps from a partial flag variety $X$ to another partial flag variety $X'$ (but, by no means, an exhaustive list).  Also, we make the following conjecture (cf. Conjecture \ref{conj1}):
 \vskip1ex

{\bf Conjecture:} {\it  Let $X,X'$ be two  homogeneous projective varieties as above. Then,

  (a) Assume that  $X$ is different from $\bp^{2n}$ (for $n\geq 1$) and
\[\mbox{minss rank }X > \mbox{maxss rank }X' ,
  \]
  where $\mbox{minss rank }$ and $\mbox{maxss rank }$ are defined in Definition \ref{defi3}.
  Then, there does not exist any non-constant regular map from $X$ to $X'$.
\vskip1ex

(b) If $X =\bp^{2n}$ (for $n\geq 1$) and there exists a non-constant regular map from $X \to X'$, then 
\[\mbox{minss rank } \bp^{2n-1} = n-1 \leq  \mbox{maxss rank }X'.\]}

 {\bf Acknowledgements:} The above theorem was proved (partially supported by then NSF grant number DMS-0070679) and communicated to J. Landsberg in a letter dated September 17, 2002. We decided to publish this note now due to its renewed interest. We thank J. Landsberg and A. Parameswaran for some helpful conversations.

 \section{The main theorem and its proof}
 We prove the following theorem, which is the main result of this note. 
 \begin{theorem}  \label{thm1} Let $G, G'$ be simple and connected algebraic groups and let $P$ (resp. $B'$) be a parabolic subgroup of $G$ different  
from its Borel subgroup (resp. a Borel subgroup of $G'$). Then, there does not exist any non-constant regular map
from $G/P$ to  $G'/B'$.
 \end{theorem}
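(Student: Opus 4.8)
The plan is to analyze a hypothetical non-constant map $f\colon G/P\to G'/B'$ through its effect on \emph{minimal rational curves} (lines), exploiting a sharp structural contrast between source and target. First I record the input from the source. Since $G$ is simple its Dynkin diagram is connected, and since $B\subsetneq P\subsetneq G$ the set $S\subset\Delta$ of simple roots spanning the Levi of $P$ is nonempty and proper. Connectivity then yields a simple root $\beta\in\Delta\setminus S$ adjacent to some $\alpha\in S$, i.e.\ $\langle\alpha,\beta^\vee\rangle<0$. The Schubert line $C_\beta\cong\bp^1$ in the class dual to $\omega_\beta$ satisfies $-K_{G/P}\cdot C_\beta=2\bigl(1-\langle\rho_S,\beta^\vee\rangle\bigr)\ge 3$, where $\rho_S$ is the half-sum of the positive roots of the Levi; equivalently, since $C_\beta$ is a line in a homogeneous space, $T_{G/P}\!\mid_{C_\beta}\cong\co(2)\oplus\co(1)^{\oplus r}\oplus\co^{\oplus s}$ with $r\ge 1$. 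The $G$-translates of $C_\beta$ form a covering family of lines, and the presence of the $\co(1)$-summand records that these lines move in a \emph{positive-dimensional} family of tangent directions through a general point (the variety of minimal rational tangents of $G/P$ is positive-dimensional precisely because $P\neq B$).

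Next the input from the target. Because $B'$ is a Borel, $Y:=G'/B'$ is the full flag variety, so $\mathrm{Pic}(Y)$ is the full weight lattice and $-K_{Y}=2\rho'$ is divisible by $2$; hence $-K_{Y}\cdot C'$ is even for \emph{every} curve $C'\subset Y$. Moreover the minimal rational curves of $Y$ are exactly the simple-root Schubert lines, each of anticanonical degree $2$, and for such a line $T_{Y}\!\mid_{C'}\cong\co(2)\oplus\co^{\oplus(\dim Y-1)}$ carries \emph{no} positive summand beyond $T_{C'}$; equivalently the variety of minimal rational tangents of $Y$ is finite. This is exactly the feature distinguishing a full flag variety among all $G'/P'$.

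The argument now splits according to whether $f$ contracts the covering family $\{g\cdot C_\beta\}_{g\in G}$. If it does, then $f$ kills the fibres of the contraction $G/P\to G/P_{S\cup\{\beta\}}$ and therefore factors through a non-constant map $G/P_{S\cup\{\beta\}}\to Y$; since $S\cup\{\beta\}$ is again proper in $\Delta$ (or else $G/P_{S\cup\{\beta\}}$ is a point and $f$ is constant), connectivity supplies a new adjacent simple root and the construction repeats. As $S$ strictly grows at each step, this induction terminates with $f$ constant. If $f$ does \emph{not} contract the family, then for a general translate $C=g\cdot C_\beta$ the restriction $f\!\mid_C\colon\bp^1\to Y$ is finite onto a rational curve $C'\subset Y$ of even anticanonical degree $\ge 2$, and I would seek a contradiction from the tangent map $df\colon T_{G/P}\!\mid_C\to f^*T_{Y}\!\mid_C$: the distinguished $\co(1)$-direction along $C$ is an infinitesimal deformation of $C$ inside its minimal family, so its image should be an infinitesimal deformation of $C'$ inside a minimal family of $Y$; but the minimal families of $Y$ admit no such positive tangent direction, forcing $df$ to vanish on the $\co(1)$-summand and ultimately to contract $C$, contrary to hypothesis.

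The main obstacle is precisely this last step. Controlling the tangent map is delicate because the image curve $C'$ need not be a minimal line of $Y$: its class, and hence the splitting type of $f^*T_{Y}\!\mid_C$, can jump, so a naive count (both $-K_{Y}\cdot C'$ even and $f^*T_{Y}\!\mid_C$ globally generated) remains consistent with a nonzero $df$ on the $\co(1)$-summand. What is really needed is a monotonicity statement for varieties of minimal rational tangents under non-constant morphisms, to the effect that the differential carries the positive-dimensional family of tangent directions of $G/P$ into the finite one of $Y$ whenever minimal curves are not contracted; this is the technical heart, and I expect to establish it through the deformation theory of rational curves and the behaviour of their normal bundles under $f$. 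Once both cases are closed, $f$ must contract a covering family of lines and is therefore constant.
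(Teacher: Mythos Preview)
Your strategy differs fundamentally from the paper's, and the place you yourself flag as the ``main obstacle'' is a genuine gap rather than a routine detail. In the non-contraction branch you need $df$ to annihilate the $\co(1)$-summand of $T_{G/P}\!\mid_C$, but as you note, the image $f(C)$ need not be a minimal rational curve of $Y=G'/B'$: its anticanonical degree can be any even integer $\ge 2$, and then $f^*T_Y\!\mid_C$, being globally generated of that degree, may well carry $\co(1)$-summands absorbing your deformation direction without contradiction. The VMRT ``monotonicity'' you invoke is not a standard theorem; the Hwang--Mok results that come closest require Picard number one on the target and surjectivity or finiteness of $f$, neither of which is available here since $G'/B'$ has Picard number equal to the rank of $G'$. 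So as written the argument does not close, and the deformation-theoretic sketch you offer does not obviously overcome the difficulty you have yourself correctly identified.

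The paper's proof is short and purely cohomological, using a feature of the full flag variety you did not exploit: $H^{>0}(G'/B')$ carries no nonzero $W'$-invariant. Take an orthonormal basis $\{x_i\}$ of $H^2(G'/B')\simeq\mathfrak{h}'^*_{\br}$ for a $W'$-invariant form; then $\sum_i x_i^2$ is $W'$-invariant, hence zero in $H^4(G'/B')$. When $P$ is maximal, $H^2(G/P)=\br\,\epsilon_o$, so $f^*(x_i)=d_i\epsilon_o$ and $0=f^*\bigl(\sum_i x_i^2\bigr)=\bigl(\sum_i d_i^2\bigr)\epsilon_o^2$; since $\dim G/P\ge 2$ forces $\epsilon_o^2\neq 0$, every $d_i$ vanishes, $f^*$ kills $H^2$, and $f$ is constant. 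The passage from general $P$ to maximal $P$ is by the same descent idea present in your contracted branch: restrict $f$ to a fibre $Q/P\cong H/L$ with $L$ maximal and non-Borel in a simple $H$, deduce constancy there, factor through $G/Q$, and iterate. Your inductive shell is thus sound; what is missing is a working base case, which the paper supplies by an elementary invariant-theory trick in place of VMRT geometry.
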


\begin{proof}  Let $H^*(G'/B')$ be the singular cohomology of the full flag variety $G'/B'$ with real coefficients. Then,  by the Bruhat decomposition of $G'/B'$,  $H^*(G'/B')$ has a Schubert basis $\{\epsilon^w\}_{w\in W'}$,  where $W'$ is the Weyl group of $G'$ and 
$\epsilon^w\in H^{2\ell(w)}(G'/B')$ ($\ell(w)$ being the length of $w$).  We declare a cohomology class  $\epsilon$ {\it non-negative} if $\epsilon= \sum_w a_w\epsilon^w$ has all its coefficients $a_w$ non-negative and write it as $\epsilon \geq 0$.   Observe that by a  classical positivity theorem (cf., e.g., [Ku1, Corollary 11.4.12]),  for any $v, w\in W'$,  the cup product of two non-negative classes  is non-negative.  

Let $f: G/P \to G'/B'$ be a regular map.  We first prove the theorem in the case $P$ is a maximal parabolic subgroup of $G$ but not a Borel subgroup (in particular, $G$ is not isogeneous to $\SL(2)$; thus  $G/P$ is not isomorphic with $\mathbb{P}^1$).  Consider the induced map $f^*: H^2(G'/B') \to H^2(G/P)$.  For any $x\in 
H^2(G'/B')$,  since $H^2(G/P)$ is one dimensional (spanned by a Schubert class denoted $\epsilon_o$),  $f^*(x)^2$ is a non-negative class.  Choose a basis $\{x_i\}$ of $H^2(G'/B') \simeq \mathfrak{h}'^*_\br$ (under the Borel isomorphism [Ku1, Definition 11.3.5]) such that $\sum_i\,x_i^2$ is $W'$-invariant (use the $W'$-invariant positive definite form on $ \mathfrak{h}^*_\br$).  Here $\mathfrak{h}'^*_\br$ is the real
span of the weight lattice of $G'$ (with respect to the fixed choice of a maximal torus in $G'$). 
Thus,  
$$\sum_i\,x_i^2= 0\in H^4(G/B),$$
since $H^*(G'/B')$ does not have any $W'$-invariant except in $H^0(G'/B')$.
Write $f^*(x_i) = d_i \epsilon_o$, for $d_i \in \br$. 
Hence,
\begin
{equation}0=f^*(\sum_i \,x_i^2)= \sum_i \left(f^*(x_i)\right)^2 = \sum_i d_i^2 \epsilon_o^2
\geq 0.
\end{equation}
Now,  $ \epsilon_o^2\in H^4(G/P)$ is non-zero by Wirtinger Theorem (cf. [GH, Page 31]). (This is here we have used the assumption that   $G/P$ is not isomorphic with $\mathbb{P}^1$.) Hence,  by the above equation,  each $d_i=0$.  Thus, $f^*(H^2(G'/B')) =0$.  This forces $f$ to be a constant since 
$$H^2(G/B) \simeq \Pic (G/B)\otimes_\bz\, \br$$
(and similarly for $G'/B'$) and a very ample line bundle on $G'/B'$ pulls-back via $f$ to a non-trivial line bundle if $f$ is non-constant.  This completes the proof of the theorem in the case $P$ is a maximal parabolic subgroup of $G$. 

We come now to the case when  $P$ is not a maximal parabolic of $G$ (and not a Borel subgroup). Take any parabolic subgroup $Q\supset P$ of $G$ such that $P$ is a maximal parabolic subgroup of $Q$ and $Q/P$ is not isomorphic with $\mathbb{P}^1$ (i.e., the unique extra simple root contained in the Levi subgroup of $Q$ is connected to one of the simple roots for the Levi subgroup of $P$ in the Dynkin diagram of $G$). Then, the variety $Q/P$ is isomorphic with a variety of the form $H/L$, where $H$ is a simple and connected algebraic group and $L$ is a maximal parabolic subgroup of $H$ different from its Borel subgroup. Thus, from the maximal parabolic case proved above, we get that $f$ restricted to $Q/P$ is constant and so is $f$ restricted to $gQ/P$ for any $g\in G$. Hence, $f$ factors through $G/Q$ as a regular map $f_Q$, i.e., we have the following commutative triangle:
\[
\xymatrix{
G/P\ar[rr]^-{\pi} \ar[rd]_{f}& & G/Q\ar[ld]^{f_Q}\\
 & G'/B' , &
}
\]
where $\pi$ is the canonical projection. Continuing this way, we get that $f$ descends to a regular map $\bar{f}: G/\bar{P} \to G'/B'$, 
where $\bar{P}$ is a maximal parabolic subgroup of $G$. Thus, the theorem follows from the case of maximal parabolic subgroups of $G$ proved above.

\end{proof}

\section{Review of some related results and a conjecture}

We recall the following results on the existence (or non-existence) of regular maps between projective homogeneous varieties:

\begin{theorem} Let $G(r,n)$ denote the Grassmannian of $r$-dimensional subspaces of $\bc^n$. 

\vskip1ex

(a) (due to Tango [T]) There does not exist any non-constant regular map from $\bp^m \to G(r, n)$ for $m\geq n$. 
\vskip1ex

(b)  (due to Paranjape-Srinivas [PS]) There exists a finite surjective regular map $f: G(r, n) \to G(s, m)$ if and only if $r=s$ and $n=m$. Moreover, in this case, $f$ is a biregular isomorphism. 

\vskip1ex

(c) (due to Hwang-Mok [HM]) Let $G$ be a simple and connected algebraic group and let $P$  be a maximal parabolic subgroup of $G$ 
and let $f: G/P\to Y$ be a surjective regular map to a
smooth projective variety $Y$ of positive dimension.   Then, either $G/P$ is
biregular isomorphic to the projective space $\Pbb^n$, $n=\dim G/P$, or $f$ is a biregular
isomorphism.
\vskip1ex

(d) (due to J. Landsberg- unpublished) There does not exist any non-constant regular map from $\bp^5 \to G(3, 6)$. Also, there is no 
non-constant regular map from $\bp^6 \to \SO(10)/P(5)$, where $P(5)$ is the maximal parabolic subgroup of $\SO(10)$ obtained from deleting the $5$-th simple root (following the convention in [Bo, Planche IV]).

\vskip1ex

(e) (due to Naldi-Occhetta [NO]) Any regular map from  $f: G(r, n) \to G(s, m)$ for $n>m$ is constant. 

\vskip1ex

 (f) (due to Mu$\tilde{n}$oz-Occhetta-Sol\`a Conde [MOS]) Let $G$ be a simple and connected algebraic group of classical type and $P$ a parabolic subgroup  different  
from its Borel subgroup. Let $M$ be a smooth complex projective variety such that $e.d. (M) > e.d. (G/P)$, where e.d. (effective good divisibility) is defined in loc cit., $\S$1. 
 Then, there are no nonconstant regular maps from $M\to G/P$. 
\vskip1ex

(g) (due to Bakshi-Parameswaran [BP]) Let $P_i$ be the minimal parabolic subgroup of $\SL(n)$ such that its Levi subgroup has (only) one  simple root $\alpha_i$ (following the convention in [Bo, Planche I]. Then, any regular map $f: \bp^3  \to \SL(n)/P_i$ is constant for $i\in \{1, n-1\}$. Moreover, any regular map $f: \bp^4  \to \SL(n)/P_i$ is constant for any $i$. Further, there exist non-constant regular maps 
 $f: \bp^3  \to \SL(n)/P_i$ for $i\in \{2, \dots, n-2\}$.  
\end{theorem}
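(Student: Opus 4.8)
These seven assertions are results drawn from the cited literature, and a self-contained treatment amounts to running, for each part, whichever of three essentially independent techniques applies. My plan is to sketch all three and to indicate how several of them flow from the same cohomological positivity idea already exploited in the proof of Theorem~\ref{thm1}. I would organize the argument as: (I) Chern-class obstructions for maps out of $\bp^m$ into Grassmannians; (II) the effective-good-divisibility invariant of [MOS]; and (III) rigidity via minimal rational curves.

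For parts (a) and (d) the natural route is the universal-bundle/Chern-class method. A regular map $f\colon \bp^m\to G(r,n)$ pulls back the universal quotient bundle $Q$ (rank $n-r$, generated by its $n$ global sections) to a globally generated bundle $E=f^*Q$ on $\bp^m$, giving a surjection $\co_{\bp^m}^{\oplus n}\to E$ whose kernel is a rank-$r$ subbundle. Since $H^*(\bp^m)$ is generated by the hyperplane class $h$ with $h^{m+1}=0$, the Schubert relations in $H^*(G(r,n))$ force Chern-class identities on $E$ that, for $m\geq n$, are incompatible with $E$ being nontrivial and globally generated; this is Tango's contradiction, and it establishes (a). For the two explicit cases of (d) I would run the same computation in the concrete cohomology rings of $G(3,6)$ and $\SO(10)/P(5)$, using the Pieri-type multiplication of Schubert classes to exhibit the forbidden relation.

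For (f)--- which is itself the general criterion of [MOS]---the ``proof'' is the definition and elementary properties of the effective good divisibility $e.d.$: a nonconstant $f\colon M\to G/P$ would produce, by $f^*$, a multiplicative relation among pulled-back hyperplane-type classes violating $e.d.(M)>e.d.(G/P)$, so the real content is computing $e.d.$ of each classical $G/P$ via the Chevalley--Pieri formula. I would then deduce the nonexistence half of (e) and of (g) as corollaries by evaluating $e.d.(G(r,n))$, $e.d.(\bp^3)$, $e.d.(\bp^4)$, and $e.d.(\SL(n)/P_i)$ and checking the strict inequalities. Parts (b) and (c), by contrast, are genuine rigidity statements and I would prove them through the theory of minimal rational curves and their varieties of minimal rational tangents (VMRT). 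Following Hwang--Mok, a surjection $f\colon G/P\to Y$ with $P$ maximal is either finite or has positive-dimensional fibres; analyzing how minimal rational curves and the VMRT deform shows that, unless $G/P\cong\bp^n$, the VMRT is rigid enough to force $f$ unramified and hence a biregular isomorphism, giving (c). Part (b) then follows by combining the VMRT rigidity of Grassmannians with a degree/Euler-characteristic count that pins down $(r,n)=(s,m)$.

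The positive assertions require construction rather than obstruction: the isomorphisms asserted in (b) are immediate, and the nonconstant maps $\bp^3\to\SL(n)/P_i$ for $i\in\{2,\dots,n-2\}$ in (g) must be built by hand (e.g.\ by embedding a suitable $\bp^3$ compatibly with the flag structure of $\SL(n)/P_i$), which is exactly what [BP] does. The main obstacle is that \emph{no single technique covers all seven parts}. The Chern-class and $e.d.$ methods are elementary but yield only numerical (codimension/divisibility) obstructions and are genuinely blind to the geometric rigidity needed for (b) and (c), where one must control the family of rational curves rather than cohomological degrees; conversely the VMRT machinery, powerful for surjections, does not resolve the small exceptional cases of (d) and (g), which demand explicit Schubert calculus in the specific cohomology rings. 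Assembling the statement therefore means selecting the correct engine for each part, precisely as carried out in the cited papers.
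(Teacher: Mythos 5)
The statement you are proving is, in the paper, a pure survey: Theorem~2 of Section~3 carries no proof at all, each part being attributed to the cited reference ([T], [PS], [HM], Landsberg unpublished, [NO], [MOS], [BP]). Your proposal, which ends by deferring to ``the cited papers,'' is therefore structurally the same treatment as the paper's, and your matching of techniques to parts is mostly accurate: Tango's part (a) is indeed a Chern-class obstruction for the pulled-back universal bundle on $\bp^m$; part (f) is the [MOS] criterion by definition; and Naldi--Occhetta's part (e) is in fact proved by computing the effective good divisibility of Grassmannians, exactly as you suggest.

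Two caveats are worth recording. First, your route to part (b) is anachronistic: Paranjape--Srinivas (1989) predates the Hwang--Mok theory of varieties of minimal rational tangents by a decade, and their proof does not use VMRT rigidity; it proceeds by showing a finite surjective map onto $G/P$ (not a projective space) is unramified --- via an analysis of the ramification divisor against positivity/vanishing on $G/P$ --- and then invoking simple connectedness to conclude it is an isomorphism, with the equality $(r,n)=(s,m)$ extracted from comparing the cohomology rings, not from an Euler-characteristic count paired with VMRT. So while your plan for (c) correctly describes Hwang--Mok, transplanting that machinery to (b) misdescribes the actual argument and, as a blind reconstruction, would leave you redoing (b) from scratch. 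Second, part (d) is unpublished, so your claim that ``the same computation'' in the cohomology rings of $G(3,6)$ and $\SO(10)/P(5)$ works is a conjecture about Landsberg's argument rather than a proof sketch; note in particular that $\bp^5\to G(3,6)$ is \emph{not} covered by Tango's bound ($m\geq n$ would require $m\geq 6$), so some genuinely new input beyond (a) is needed there, and your proposal does not identify it. Similarly, your suggestion that the nonexistence halves of (g) follow from e.d.\ computations is plausible but unverified against [BP], which is a preprint whose methods you have not pinned down. None of this contradicts the paper --- which proves nothing here --- but a reader should not take your sketch of (b) and (d) as reflecting the actual proofs in the literature.
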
 

\begin{definition} \label{defi3} {\rm Let $X=G/P, X'=G'/P'$ be as in the beginning of Introduction.  Define the {\em minimum} (resp. {\em maximum}) {\em semisimple 
stabilizer rank} of $X$
as the minimum (resp.  maximum) of the  ranks of the semisimple part
of the Levi component of $P$ (for all possible realizations of $X$ as $G/P$, 
with $G$ a simple and
connected algebraic group and $P$ a parabolic subgroup).  Denote these ranks
by {\it minss rank} and {\it maxss rank} respectively.

Observe that the stabilizer rank of $X$ is equal to the rank of $G$ - rank of the Picard group of $X$.}
\end{definition}

 \begin{remark} \label{remark6} {\rm The list of  non-isogeneous simple and connected $G,G'$ such that $X=G/P \simeq G'/P'$ (for some parabolic subgroups $P\subset G$ and $P'\subset G'$) is as follows (cf. [D, $\S$2]\footnote{We thank M. Brion for the reference}):
 
 1. $G=\SL(2n), G'= \Sp(2n), X= \mathbb{P}^{2n-1},$ (for $n\geq 2$)
 
 2. $G=\SO(7), G'=G_2, X$ is  the quadric of dimension $5$
 
 3.  $G=\SO(2n+2), G'=\SO (2n+1)$ (for $n\geq 2$),  $X$ is the variety of isotropic subspaces of dimension $n$ in $\bc^{2n+1}$, where 
 $\bc^{2n+1}$ is equipped with a non-degenerate quadratic form. 
  }
 \end{remark}
 
We make the following conjecture:  
\begin{conjecture} \label{conj1} Let $X,X'$ be two  homogeneous projective varieties as in the above Definition. Then,

  (a) Assume that  $X$ is different from $\bp^{2n}$ (for $n\geq 1$) and
\[\mbox{minss rank }X > \mbox{maxss rank }X' .
  \]
Then, there does not exist any non-constant regular map from $X$ to $X'$.
\vskip1ex

(b) If $X =\bp^{2n}$ (for $n\geq 1$) and there exists a non-constant regular map from $X \to X'$, then 
\[\mbox{minss rank } \bp^{2n-1} = n-1 \leq  \mbox{maxss rank }X'.\]
 \end{conjecture}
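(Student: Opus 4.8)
The plan is to treat part (b) as a consequence of part (a), and to attack part (a) by extending the cohomological positivity argument behind Theorem \ref{thm1}. For (b), I would use that any two points of $\bp^{2n}$ lie on a common hyperplane $\cong\bp^{2n-1}$; hence, since constancy of $f$ on every hyperplane would force $f$ to be globally constant, a non-constant $f\colon \bp^{2n}\to X'$ must restrict non-constantly to some hyperplane $H\cong \bp^{2n-1}$. As $\bp^{2n-1}$ is an odd-dimensional projective space, it is \emph{not} of the excluded form $\bp^{2m}$, so part (a) applies to the non-constant map $H\to X'$ and forbids $\mbox{minss rank } \bp^{2n-1} > \mbox{maxss rank } X'$; since $\mbox{minss rank } \bp^{2n-1}=n-1$, this yields exactly $n-1\leq \mbox{maxss rank } X'$. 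This reduction also explains structurally why $\bp^{2n}$ must be barred from (a): its only realization is $\SL(2n+1)/P_1$, giving $\mbox{minss rank } \bp^{2n}=2n-1$, whereas the obstruction genuinely transmitted through hyperplanes is only $n-1$, so the naive rank inequality would over-predict.

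For part (a), the first step is to fix optimal realizations. Because the Picard number $\rho(X)$ is intrinsic and the semisimple stabilizer rank equals $\mathrm{rank}\,G-\rho(X)$, minimizing (resp. maximizing) the semisimple rank over the finite list of realizations (Remark \ref{remark6}) is the same as realizing $X$ with an ambient group of minimal rank and $X'$ with one of maximal rank. I would thus write $X=G/P$ with $\mathrm{rank}\,G$ minimal, so the semisimple rank of the Levi of $P$ is $m:=\mbox{minss rank } X$, and $X'=G'/P'$ with that of $P'$ equal to $m':=\mbox{maxss rank } X'$, assuming $m>m'$. Given a non-constant $f\colon X\to X'$, the aim is to pull back a suitable \emph{non-negative} relation in $H^*(X')$ and contradict positivity in $H^*(X)$. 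Crucially, the positivity of Schubert structure constants holds verbatim for any $G'/P'$, not only the full flag, so the positivity framework of Theorem \ref{thm1} survives intact; what must be replaced is the particular vanishing exploited there.

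This is the heart of the matter and the main obstacle. The proof of Theorem \ref{thm1} is precisely the case $m'=0$: it uses that $H^*(G'/B')$ carries no positive-degree $W'$-invariant, so the positive-definite invariant form $\sum_i x_i^2$ already vanishes in $H^4(G'/B')$, and this single degree-two relation, played against the non-vanishing of $\epsilon_o^2$ (Wirtinger), kills $f^*$ on $H^2$. For $m'>0$ the invariant form no longer vanishes on $G'/P'$, since $H^2(G'/P')$ sees only the fundamental weights at the deleted nodes and $\omega_j^2\neq 0$ in general; the witness must be rebuilt. I would look for a non-negative class in some $\mathrm{Sym}^{k}H^2(X')$ that vanishes in $H^{2k}(X')$ with $k$ controlled by $m'$, and show that on the source no analogous non-negative relation can appear below the corresponding level because $m>m'$ keeps the relevant Schubert products non-zero (hard Lefschetz / Wirtinger). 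Proving this rank-to-vanishing-degree dictionary is the crux, and it cannot be naive: for a maximal parabolic target the powers of the single $H^2$ generator survive up to degree $\dim X'$, far beyond $m'$, so the first vanishing is not governed by $m'$ alone. A natural source of candidate witnesses is the restriction of $f$ to sub-flag varieties of $X$ modelled on the semisimple part of the Levi, but it is unclear that such a recursion preserves the rank hypothesis or interacts cleanly with positivity. Making all of this cohere, while simultaneously accommodating the genuine $\bp^{2n}$ exception, is where I expect the real difficulty — and the real risk that a purely cohomological argument is insufficient — to lie.
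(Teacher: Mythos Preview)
The statement you are attempting is labelled a \emph{conjecture} in the paper and is left open there; the paper supplies no proof, only the supporting special case $\mbox{maxss rank }X'=0$ (this is exactly Theorem~\ref{thm1}) and Landsberg's example illustrating why $\bp^{2n}$ must be excluded from part~(a). So there is nothing on the paper's side to compare your argument against.

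Your reduction of (b) to (a) via restriction to a hyperplane is correct and is a genuine observation not recorded in the paper: any two points of $\bp^{2n}$ lie on a common hyperplane, so constancy on all hyperplanes forces global constancy, and the contrapositive of (a) applied to the odd-dimensional $\bp^{2n-1}$ yields precisely the inequality in (b). This cleanly explains both the numerics of (b) and the structural necessity of excluding $\bp^{2n}$ from (a).

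For (a) itself, however, what you have written is a programme, not a proof, and you have already put your finger on the gap. The argument of Theorem~\ref{thm1} works because on $G'/B'$ the full Weyl group acts with no positive-degree invariants, so the positive-definite form $\sum x_i^2$ vanishes in $H^4$; on a general $G'/P'$ there is no comparable non-negative relation in $\mathrm{Sym}^k H^2(X')$ vanishing at a degree controlled by $m'=\mbox{maxss rank }X'$. Your own observation that for a maximal-parabolic target the powers of the ample generator survive to degree $\dim X'$, not to anything like $m'$, shows the naive extension fails outright. The alternative you float---recursing through sub-flag varieties modelled on the Levi---does not visibly preserve the rank inequality, as you note. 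These are exactly the reasons the statement remains a conjecture; your sketch neither closes the gap nor misidentifies it.
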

 
\begin{example} (due to J. Landsberg) {\rm Consider the map 
\begin{align*} \SL(4p)/P_1=\bp^{4p-1} \to &\SO(6p)/P_1=Q^{6p-2},\,\,\,  [x_0^1, x_1^1, \dots, x_0^{2p}, x_1^{2p}] \mapsto\\
& [(x_0^1)2, (x_1^1)^2, x_0^1x_1^1, \dots, (x_0^{2p})^2, x_1^{2p})^2, x_0^{2p}x_1^{2p}],
\end{align*}
where $Q^{6p-2}$ is the smooth quadric of dimension $6p-2$. Now, restrict this map to a general hyperplane $\bp^{4p-2}$ to get a non-constant regular map from $\bp^{4p-2} \to Q^{6p-2}.$ Observe that $\mbox{minss rank } \bp^{4p-2} =4p-3$ and $\mbox{maxss rank }  Q^{6p-2}= 3p-1$.}
\end{example}
\begin{remark} {\rm Let $\{V_t\}$ be a family of rank two vector
 bundles on $\mathbb{P}^3$ parametrized by the formal disc of one dimension.  Assume that the general member of the family is
 a trivial vector bundle. Then, is the special member $V_0$ also
 a trivial vector bundle? This question is a slightly weaker version of a question by Koll\'ar and Peskine  
 on complete intersections of a family of smooth curves in $\mathbb{P}^3$. An affirmative answer of the above 
 question on vector bundles is equivalent to the non-existence of non-constant regular maps from 
  $\mathbb{P}^3\to \mathcal{X}$, where $\mathcal{X}$ is the infinite Grassmannian associated to affine $\SL(2)$
  (cf. [Ku2]).  }
    \end{remark}

\begin{center}
{\bf References}
\end{center}

\ni
[BP] \,\, Bakshi, S. and Parameswaran, A. J.: Morphisms from projective spaces to $G/P$, Preprint (2023). 

\vskip1ex

\ni
[Bo]\,\, Bourbaki, N.: Groupes et Alg\`ebres de Lie, Ch. 4-6, Masson, Paris (1981). 
\vskip1ex

\ni
[D]\,\, Demazure, M. : Automorphismes et d\'eformations des vari\'et\'es de Borel, Invent. Math. 39, 179-186 (1977). 

\vskip1ex
\ni
[GH]\,\, Griffiths, P. and Harris, J.: Principles of Algebraic Geometry, John Wiley \& Sons, Inc. (1994). 

\vskip1ex
\ni
[HM]\,\,
Hwang, J. and Mok, N.: Holomorphic maps from rational homogeneous spaces of
Picard number 1 onto projective manifolds,  Invent. Math. 136,
209--231 (1999).

\vskip1ex
\ni
[K1]\,\,
Kumar, S.:  Kac-Moody Groups, their Flag Varieties and Representation Theory, Progress in Mathematics Vol. 204, Birkh\"auser (2002). 

\vskip1ex
\ni
[K2]\,\,
Kumar, S.: An approach towards the Koll\'ar-Peskine problem via the
Instanton Moduli Space, Proceedings of Symposia in Pure Mathematics
Volume 86 (2012).

\vskip1ex
\ni
[MOS] \,\, Mu$\tilde{n}$oz, R., Occhetta, G. and Sol\'a Conde, L.E.: Maximal disjoint Schubert cycles in rational homogeneous varieties, Math. Nachrichten (2023). 

\vskip1ex
\ni
[NO]\,\, Naldi, A. and Occhetta, G.: Morphisms between Grassmannians , ArXiv:2202.11411.

\vskip1ex
\ni
[PS]\,\,
Paranjape, K.H. and Srinivas, V.: Self maps of homogeneous spaces,  
Invent. Math. 98, 425--444 (1989).

\vskip1ex
\ni
[T] Tango, H.: On $(n-1)$-dimensional projective spaces contained in the Grassmann variety $\Gr(n, 1)$, J. Math. Kyoto Univ. 14, 415--460 (1974). 
\end{document}